\pgfplotsset{my style/.append style={axis x line=middle, axis y line=
middle, xlabel={$x$}, ylabel={$y$}, axis equal }}
\newtheorem{thm}{Theorem}[section]
\newtheorem{lem}[thm]{Lemma}
\newtheorem{prop}[thm]{Proposition}
\theoremstyle{definition}
\theoremstyle{remark}
\newtheorem{rem}[thm]{Remark}
\title{Global weak solutions for a $2\times 2$ balance nonsymmetric system of Keyfitz-Kranzer type }
\author[1]{Richard A. De la Cruz G.\footnote{{\it e-mail:} \href{mailto:richard.delacruz@uptc.edu.co}{richard.delacruz@uptc.edu.co}}}
  \author[2,3]{Juan C. Juajibioy \footnote{{\it e-mail:} \href{mailto:jcjuajibioyo@unal.edu.co}{jcjuajibioyo@unal.edu.co}}}
  \author[2]{Leonardo Rendon\footnote{{\it e-mail:} \href{mailto:lrendona@unal.edu.co}{lrendona@unal.edu.co}}}
  \affil[1]{Escuela de Matemáticas y Estadística, Universidad Pedagógica y Tecnológica de Colombia, Tunja}
  \affil[2]{Departamento de Matemáticas, Universidad Nacional de Colombia, Bogot\'a}
  \affil[3]{Departamento de Ciencias Naturales y Exáctas, Fundación Universidad Autonoma de Colombia, Bogot\'a}
\date{\today}
\begin{document}
\maketitle
\begin{abstract}\noindent
In this paper  we consider  the  existence  of global  weak entropy solutions
for a particualr nonsymmetric Keyfitz-Kranzer type system,  by using  the
compensated compactness method  we  get bounde entropy weak
solutions .
\end{abstract}
\section{introduction}
 In this chapter  we consider  the balanced nonsymmetric system 
\begin{equation}\label{intro1}
 \begin{cases}
  \rho_t+(\rho\phi(\rho,w))=f(\rho,w),\\
  (\rho w)_t+(\rho w\phi(\rho,w))_x=g(\rho,w)
 \end{cases}
 \end{equation}
where $\phi(\rho,w)=\Phi(w)-P(\rho)$, $\Phi$ a convex function. This
system was considered in \cite{lu3} where the author  showed the existence
of  global  weak solution  for the homogeneous system (\ref{intro1}). Another
system of  the type (\ref{intro1}) was considered in \cite{bt1} as a generalization to 
the  scalar Buckley-Leverett equations describing two phase  flow
in  poros media. The system (\ref{intro1}), recently, has been
object of constant  studies, in \cite{delta1} the author
considered  the particular  case in which  $\Phi(w)=w$, $P(\rho)=\frac{1}{\rho}$, in this case  the two charactheristics  of the 
system (\ref{intro1}) are linear degenerate, solving the
Riemann problem the existence and uniqueness of delta shock solution were established. In this line in \cite{nosyme1}
the  authors  considered the case $\Phi(w)=w$ and $P(\rho)=\frac{B}{\rho^{\alpha}}$
with $\alpha\in (0,1)$, the  existence and uniqueness of  solutions  to the the Riemann problem   was got by solving the  Generalize Rankine-Hugoniot condition.
In  both  cases, when $\Phi(s)=s$ the system (\ref{intro1})  models  vehicular traffic flow in a highway without
entry neither  exit of  cars, in this  case the source term  represents 
the entry or exit of  cars  see \cite{trasource},\cite{corli} and  reference therein
for more  detailed  description of source  term.\\[0.2in]
Noticing  that  when $w$ is  constant, the  system (\ref{intro1}) reduces
to the  scalar  balance  laws
\begin{equation}\label{escalar1}
 \rho_t+(\rho\Phi(w)-\rho P(\rho))_x=f(\rho,w),
\end{equation}
and from the  second  equation in (\ref{intro1}) $g$ should be of the  form
\[
 wg=f.
\]
Moreover, if  we make $h(\rho)=\rho\Phi(w)-P(\rho)$ 
the global  weak solution  of  the Cauchy problem
\begin{equation}\label{escalar2}
 \begin{cases}
 \rho_t+h(\rho)_x=f(\rho),\\
 \rho(x,0)=\rho_0(x)
\end{cases}
\end{equation}
there  exists if $h(\rho)$ is  a convex function and
the source  term is dissipative, i.e 
\begin{align}
 &h^{''}(\rho)=-(2P^{'}(\rho)+\rho P^{''}(\rho))>0\\
 &sf(s)\leq 0.
\end{align}
We assume the following  conditions,
\begin{enumerate}[$\text{C}_1$]
 \item $f$, $g$ are  Lipschitz functions   such that 
 \begin{equation}\label{s1}
 wg(\rho,w)=f(\rho,w), \ f(0,0)=0
 \end{equation}
\item There exist a constan $M>0$ such  that 
\begin{equation}\label{s2}
 sf(s)\leq 0,
\end{equation}
for $|s|>M$
\item The  function $P(\rho)$ satisfies
\begin{align}
&P(0)=0, \ \lim_{\rho \to 0} \rho P^{'}(\rho)=0, \ \lim_{\rho\to \infty}P(\rho)=\infty\\
&\rho P^{''}(\rho)+2P^{'}(\rho)<0,  \ \text{for} \ \rho>0               
\end{align}

\end{enumerate}
\begin{rem}
 By example if $f(\rho,m)=\rho$, then $g(\rho,m)=\rho w$ in 
 this case we have the nonsymmetric  system with lineal damping.
\end{rem}
Making $m=pw$, system (\ref{intro1}) can  be  transformed in a symmetric system
\begin{equation}\label{intro2}
 \begin{cases}
  \rho_t+(\rho\phi(\rho,m))=f(\rho,m),\\
  m_t+(\rho w\phi(\rho,m))_x=g(\rho,m)
 \end{cases}
 \end{equation}
for this  system. Making $F(\rho,m)=(\rho\phi(\rho,m),m\phi(\rho,m))$, then
\[
dF=
\begin{pmatrix} \phi+\rho\phi_{\rho}&\rho\phi_m\\
m\phi_{\rho}&\phi+m\phi_m \end{pmatrix},
\]
so the  eigenvalues and  eigenvector  of $dF$ are given  by

\begin{align}
 \lambda_1(\rho,m)& =\Phi(\frac{m}{\rho})-P(\rho)  & r_1&=(1,-\frac{\phi_{\rho}}{\phi_m}) \label{eq:1} \\
\lambda_2(\rho,m)& = \Phi(\frac{m}{\rho})-\rho P^{'}(\rho) &  r_2&=(1,\frac{m}{\rho}). \label{eq:2}
\end{align}
From (\ref{eq:1}), (\ref{eq:2}) the  k-Riemann  invariants are given  by
\begin{equation}\label{RI1}
 \begin{cases}
  W(\rho,m)=\Phi(\frac{m}{\rho})-P(\rho),\\
  Z(\rho,m)=\frac{m}{\rho}.
 \end{cases}
\end{equation}
Moreover
\begin{align}
 &\nabla \lambda_1\cdot r_1=0,\\
 &\nabla \lambda_2\cdot r_2=2P^{'}(\rho)+\rho P^{''}(\rho).
\end{align}
By  $\text{C}_3$ condition, the  system (\ref{aprio1}) is
linear  degenerate in the first charactheristic field, non linear degenerate
in the second  charactheristic field  and non strictly hyperbolic.
In this paper  we obtain  the main following theorem
\begin{thm}
 Let the  initial  data
 \begin{equation}\label{data1}
  \rho(x,0)=\rho_0(x), w(x,0)=w_0(x) \in L^{\infty}(\Omega),
 \end{equation}
whit  $\rho_0(x)\geq 0$. The total  variation of the Riemann invariants
$W_0(x)$ be  bounded, and the conditions $C_1$, $C_2$, $C_3$   holds, then the  Cauchy problem
(\ref{intro1}), (\ref{data1}) has  a  global bounded weak entropy solution and
$w_x(x,t)$ is  bounded in $L^1(\mathbb{R})$. Moreover
for $w$ costant  $\rho$ is the global weak solution
of  the  scalar  balance  laws
\begin{equation}\label{escalar3}
 \rho_t+h(\rho)_x=f(\rho),
\end{equation}
where $h(\rho)=\Phi(w)\rho-\rho P(\rho)$, $f(\rho)=f(\rho,w)$.
\end{thm}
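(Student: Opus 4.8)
The plan is to build the solution by vanishing viscosity together with the compensated compactness method, exploiting the structural fact that the $\lambda_1$-field is linearly degenerate: after using the source relation in $\mathrm C_1$, the Riemann invariant $w$ satisfies a pure convection--diffusion equation, so it stays bounded and $BV$, and the problem collapses to a scalar balance law for $\rho$. Concretely, for $\varepsilon>0$ I would study the parabolic system
\begin{align*}
&\rho^\varepsilon_t+\big(\rho^\varepsilon\phi(\rho^\varepsilon,w^\varepsilon)\big)_x=f(\rho^\varepsilon,\rho^\varepsilon w^\varepsilon)+\varepsilon\rho^\varepsilon_{xx},\\
&(\rho^\varepsilon w^\varepsilon)_t+\big(\rho^\varepsilon w^\varepsilon\phi(\rho^\varepsilon,w^\varepsilon)\big)_x=g(\rho^\varepsilon,\rho^\varepsilon w^\varepsilon)+\varepsilon(\rho^\varepsilon w^\varepsilon)_{xx},
\end{align*}
with mollified initial data $\rho_0^\varepsilon\ge\delta(\varepsilon)>0$, $\rho_0^\varepsilon\to\rho_0$, $w_0^\varepsilon\to w_0$. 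Local existence and smoothness follow from standard parabolic theory once the lower bound on $\rho^\varepsilon$ is in force; the substance is in the uniform a priori estimates below.

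First I would derive the $L^\infty$ and $BV$ bounds. Writing the equation for $w^\varepsilon=(\rho^\varepsilon w^\varepsilon)/\rho^\varepsilon$ and using the relation $wg=f$ shows that the zeroth--order source cancels, so $w^\varepsilon$ solves a convection--diffusion equation; the maximum principle then gives $\|w^\varepsilon\|_{L^\infty}\le\|w_0\|_{L^\infty}$ and a positive lower bound for $\rho^\varepsilon$ that deteriorates only as $\varepsilon\downarrow0$. Differentiating that equation in $x$, multiplying by $\operatorname{sgn}(w^\varepsilon_x)$ and integrating yields $\|w^\varepsilon_x(\cdot,t)\|_{L^1}\le TV(w_0)$ uniformly in $\varepsilon$ (a Lipschitz--source contribution being absorbed by Gronwall). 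For the upper bound on $\rho^\varepsilon$ I would use the Riemann invariant $W=\Phi(w)-P(\rho)$: along the parabolic flow $W$ obeys a maximum principle modulo a source term controlled by the dissipativity $sf(s)\le0$ of $\mathrm C_2$, and since $P(\rho)\to\infty$ by $\mathrm C_3$ this bounds $\rho^\varepsilon$ from above; together with $\rho^\varepsilon\ge0$ we obtain $0\le\rho^\varepsilon\le C$ uniformly in $\varepsilon$.

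Next, compactness. The $BV$ bound makes $\{w^\varepsilon\}$ precompact in $L^1_{loc}$, so along a subsequence $w^\varepsilon\to w$ a.e.\ with $w_x\in L^1(\mathbb R)$. With $w^\varepsilon$ converging strongly, the first equation is a viscous approximation of a scalar balance law for $\rho$ whose flux $\rho\Phi(w)-\rho P(\rho)$ is genuinely nonlinear in $\rho$ by $\mathrm C_3$ (equivalently $\nabla\lambda_2\cdot r_2=2P'+\rho P''\neq0$). Taking a family of entropy--entropy flux pairs $(\eta,q)$ for this equation, I would show the dissipation $\partial_t\eta(\rho^\varepsilon,w^\varepsilon)+\partial_x q(\rho^\varepsilon,w^\varepsilon)$ is compact in $H^{-1}_{loc}$ using the uniform $L^\infty$ bound, the energy estimate $\varepsilon\iint|\rho^\varepsilon_x|^2<\infty$ and the strong convergence of $w^\varepsilon$; the div--curl lemma together with genuine nonlinearity then force the associated Young measure to be a Dirac mass, hence $\rho^\varepsilon\to\rho$ a.e. Passing to the limit in the weak formulation and in the Lax entropy inequalities yields the global bounded weak entropy solution. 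Finally, if $w_0\equiv w$ is constant then transport gives $w^\varepsilon\equiv w$, so the first equation becomes exactly $\rho^\varepsilon_t+h(\rho^\varepsilon)_x=f(\rho^\varepsilon)+\varepsilon\rho^\varepsilon_{xx}$ with $h(\rho)=\Phi(w)\rho-\rho P(\rho)$ convex by $\mathrm C_3$, whose limit is the Kruzhkov entropy solution of $\rho_t+h(\rho)_x=f(\rho)$, which is the last assertion.

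The main obstacle will be, on one hand, showing that the source does not destroy the invariant region---that is, the uniform-in-$\varepsilon$ upper bound on $\rho^\varepsilon$, which is precisely where $\mathrm C_2$ and $\mathrm C_3$ must be combined carefully---and, on the other hand, the $H^{-1}_{loc}$ compactness of the entropy dissipation: because the system is non-strictly hyperbolic with one linearly degenerate field, a direct $2\times2$ compensated-compactness argument is unavailable, and one must genuinely decouple the system using the $BV$ transport of $w$, controlling the cross terms involving $w^\varepsilon_x$ in the entropy balance.
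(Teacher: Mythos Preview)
Your plan is sound and lands on the same solution as the paper, but the route differs in two places worth noting. For the uniform $L^\infty$ bounds the paper does not isolate the transport--diffusion equation for $w$; instead it works in the $(\rho,m)$ variables and shows that the region $\Sigma=\{G_1\le 0,\ G_2\le 0\}$, with $G_1=C_1-W$ and $G_2=Z-C_2$ built from the Riemann invariants, is an invariant region in the sense of Chueh--Conley--Smoller: it checks quasi--convexity of $G_1,G_2$ and the inward condition $\nabla G_i\cdot H\le 0$ on the source (this is where $\mathrm C_1$ enters), and then reads off $0<\delta\le\rho\le M$, $|m/\rho|\le M$. For the passage to the limit the paper does \emph{not} first extract strong convergence of $w^\varepsilon$ and then run a scalar argument; it stays with the $2\times 2$ system, uses the Keyfitz--Kranzer entropy pairs $\eta=\rho F(m/\rho)$, $q=\eta\,\phi$, obtains the dissipation bound $\varepsilon\iint (F''/\rho)\,(\tfrac{m}{\rho}\rho_x-m_x)^2\le C$, proves $H^{-1}_{loc}$ compactness of two concrete families (one of scalar type in $\rho$ and one of the form $(\rho\Phi(w))_t+(\cdots)_x$), and then feeds the pairs $(\eta_1,q_1)=(\rho,\rho\phi)$, $(\eta_2,q_2)=(\rho w,\rho w\phi)$ into Tartar's commutation relation, where the pointwise identity $\eta_1q_2-\eta_2q_1=0$ forces the Young measure to reduce. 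Your decoupling approach is more transparent about the role of the linearly degenerate field and gives the $L^1$ bound on $w_x$ almost for free; the paper's invariant--region/Tartar approach is more systematic and avoids having to control the cross terms in the scalar entropy balance by hand. Either way, the ``constant $w$'' corollary drops out identically.
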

\section{A priori bounds and  existence}
In order  to  get weak  solutions,  in this  section  we investigate the 
problem of the  existence of the solutions for the  parabolic  regularization
to the system (\ref{intro1})
\begin{equation}\label{aprio1}
 \begin{cases}
  \rho_t+(\rho\phi(\rho,w))=\epsilon \rho_{xx}+f(\rho,w),\\
  (\rho w)_t+(\rho w\phi(\rho,w))_x=\epsilon(\rho w)_{xx} +g(\rho,w)
 \end{cases}
 \end{equation}
with initial  data
\begin{equation}\label{aprio2}
 \rho^{\epsilon}(x,0)=\rho_0(x)+\epsilon, \ w^{\epsilon}(x,0)=w_0(x).
\end{equation}
We consider the transformation $m=\rho w$,  replacing in (\ref{aprio1}) we have
\begin{equation}\label{aprio3}
 \begin{cases}
  \rho_t+(\rho\phi(\rho,m))=\epsilon \rho_{xx}+f(\rho,m),\\
  m_t+(m\phi(\rho,m))_x=\epsilon m_{xx} +g(\rho,m),
 \end{cases}
 \end{equation}
with initial  data
\begin{equation}\label{datavanish}
 \rho(x,0)=\rho_0(x), \ \ m(x,0)=\frac{w_0(x)}{\rho_0(x)}.
\end{equation}

\begin{prop}
Let $\epsilon >0$ be, the Cauchy problem (\ref{aprio1}), (\ref{aprio2}),  has  a unique
solution for any $(\rho_0,w_0)$. Moreover if $(\rho_0,w_0)\in \Sigma$  thei
solutions $(\rho^{\epsilon},m^{\epsilon})$ satisfies
\begin{equation}\label{cotas1}
0<c\leq \rho(x,t)\leq M, \ \ |\frac{m(x,t)}{\rho(x,t)}|\leq M
\end{equation}
\end{prop}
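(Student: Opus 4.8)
The plan is to work throughout with the symmetric form (\ref{aprio3})--(\ref{datavanish}) in the unknowns $(\rho,m)$, $m=\rho w$, since the bounds (\ref{cotas1}) for $(\rho,w)$ are equivalent to $0<c\le\rho\le M$ and $|m/\rho|\le M$ for $(\rho,m)$. The argument follows the classical scheme for viscous approximations: (a) local existence and uniqueness by a fixed-point argument, exploiting that $\phi(\rho,m)=\Phi(m/\rho)-P(\rho)$ is smooth once $\rho$ is bounded away from $0$; (b) an a priori invariant-region estimate, which is precisely (\ref{cotas1}); (c) a continuation argument upgrading (a) to a global solution.

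For step (a) I would fix $c,M>0$ and set $K=\{(\rho,m):c\le\rho\le M,\ |m|\le M\}$; on $K$ the fluxes $\rho\phi,\ m\phi$ and the sources $f,g$ are $C^1$, hence Lipschitz. Writing (\ref{aprio3}) in Duhamel form $U(t)=e^{\epsilon t\partial_x^2}U_0+\int_0^t\partial_x e^{\epsilon(t-s)\partial_x^2}\bigl(-F(U)\bigr)(s)\,ds+\int_0^t e^{\epsilon(t-s)\partial_x^2}G(U)(s)\,ds$ and using the $t^{-1/2}$ bound for $\partial_x e^{t\partial_x^2}$ on $L^\infty$, a contraction mapping argument in $C([0,T_0];L^\infty(\mathbb{R}))$ produces, for $T_0$ depending only on $c,M,\epsilon$ and $\|U_0\|_\infty$, a unique mild solution that stays in a slightly enlarged box; parabolic regularity makes it classical for $t>0$, and it persists as long as $\rho$ stays bounded and bounded below. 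Since $\rho^\epsilon(\cdot,0)=\rho_0+\epsilon\ge\epsilon>0$, this gives a local solution for every datum, together with uniqueness.

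The heart of the proof is step (b): I would show that $\Sigma$, the region cut out by level curves of the Riemann invariants $W(\rho,m)=\Phi(m/\rho)-P(\rho)$ and $Z(\rho,m)=m/\rho$ of (\ref{RI1}), is positively invariant for (\ref{aprio3}). First, for $w=m/\rho$: subtracting $w$ times the $\rho$-equation from the $m$-equation and using $C_1$, one finds that $w$ solves the homogeneous uniformly parabolic equation
\[ w_t+\Bigl(\phi(\rho,w)-\frac{2\epsilon\rho_x}{\rho}\Bigr)w_x=\epsilon\,w_{xx}, \]
whose coefficients are bounded while $\rho\ge c>0$; the maximum principle for bounded solutions on $\mathbb{R}$ then yields $|w(x,t)|\le\|w_0\|_\infty\le M$, the second bound in (\ref{cotas1}), and confines $Z$ to a fixed interval. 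With $w$ under control, a lengthy but routine computation from the conservative form and (\ref{eq:1})--(\ref{eq:2}) shows that $W$ satisfies a parabolic equation $W_t+\lambda_1(\rho,m)W_x=\epsilon W_{xx}+\epsilon Q-P'(\rho)f(\rho,w)$, where $Q$ is a quadratic form in $(\rho_x,w_x)$ whose sign is governed by the convexity of $\Phi$ and the one-signed combination $\rho P''+2P'$ of $C_3$; on the portion of $\partial\Sigma$ where $W=\sup_x W(\rho_0,m_0)$ (resp.\ the infimum), the remaining zeroth-order drift $-P'(\rho)f$ points inward by the dissipativity hypothesis $C_2$ together with the sign information on $P'$ from $C_3$, so the maximum principle keeps $W$ within its initial range. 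Finally, the behaviour of $P$ at $0$ and $\infty$ in $C_3$ (in particular $P(\rho)\to\infty$) converts the two-sided bound on $W$, together with the bound on $w$, into $0<c\le\rho\le M$; the lower bound may depend on $\epsilon$ through $\rho_0+\epsilon$, which is immaterial at this stage. Equivalently, one may repackage the whole argument as an application of the Chueh-Conley-Smoller invariant-region theorem for parabolic systems with a source, the verification reducing to the inward-pointing conditions on $\partial\Sigma$ supplied by $C_1$, $C_2$, $C_3$.

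Since the bounds in (b) are uniform in $t$ and trap $(\rho,m)$ in a fixed compact set on which (\ref{aprio3}) is uniformly parabolic with $C^1$ coefficients, the local solution from (a) extends to all $t>0$ by the standard continuation argument, which completes the proof. I expect the genuine obstacle to be step (b), and within it the bound on $\rho$: unlike $w$, the density does not satisfy a source-free equation, so one is forced to pass to the Riemann invariant $W$, control the viscous remainder $Q$, and check that the source cannot push $W$ off $\Sigma$ -- this is exactly where $C_2$ and $C_3$ are indispensable and where no off-the-shelf existence theorem suffices.
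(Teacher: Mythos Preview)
Your proposal is correct and follows essentially the same route as the paper: the paper verifies the Chueh--Conley--Smoller hypotheses directly (quasi-convexity of $G_1=C_1-W$ and $G_2=Z-C_2$, and the inward-pointing condition $\nabla G_i\cdot H\le 0$, the latter reducing via $C_1$ to a sign check on $P'f$), then reads off (\ref{cotas1}) from the resulting bounds on the Riemann invariants together with $C_3$. Your hands-on derivation of scalar parabolic equations for $w=Z$ and for $W$ followed by the maximum principle is exactly the computation underlying the CCS verification, as you yourself note in the last line of step~(b).
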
\noindent
The  proof of this  theorem  is postponed at the end of the  section. We begin
with some lemmas that will be usefull afterward.\\
Let $U=(\rho,m)^T$, $H(U)=(f(U), g(U))$ and  $M=DF$ where $F(\rho,m)=(\rho\phi,m\phi)$. Then the  system
(\ref{aprio3}) can be  written in the form
\begin{equation}\label{prop1}
 U_t=\epsilon U_{xx}+MU_x+H.
\end{equation}
For any $C_1$, $C_2$ constants let
\begin{align}
 G_1&=C_1-W,\label{prop:2}\\
 G_2&=Z-C_2\label{prop:3},
\end{align}
where $W$, $Z$  are the  Riemann invariants  given in (\ref{RI1}). We proof
that the  region 
\begin{equation}\label{prop3}
 \Sigma= \{(\rho,m): G_1\leq 0, G_2\leq 0\}
\end{equation}
is an invarian  region.
\begin{lem}
 If $\rho\in C^{1,2}([0,T]\times \mathbb{R})$ satisfies
 \begin{equation}\label{plower}
  \rho_t+(\rho\phi(w,\rho))_x=f(\rho,w),
 \end{equation}
with $\rho(0,\cdot)\geq 0$ and $w\in C^{1}(\Omega)$ then
$\rho(t,\cdot)\geq 0$, moreover if $\rho(0,\cdot)\geq \delta>0$
\begin{equation}\label{plower2}
 \int_0^T\int_{-\infty}^{\infty}\rho|w-w_0|dxdt<K
\end{equation}
with $k$ constant, then $\rho(x,t)\leq \delta(\epsilon,T)>0$
in $(0,T)$.

\end{lem}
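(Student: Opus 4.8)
\noindent The plan is to read~\eqref{plower} --- with the viscosity $\epsilon\rho_{xx}$ restored on the right, as it occurs in~\eqref{aprio1} and as the conclusion tacitly requires --- as a \emph{linear} uniformly parabolic equation for $\rho$ in which $w\in C^{1}$ is a prescribed coefficient, and to combine the parabolic maximum principle with Duhamel's formula. Expanding the flux, $\phi(\rho,w)=\Phi(w)-P(\rho)$ gives $(\rho\phi)_x=(\phi-\rho P'(\rho))\rho_x+\rho\,\Phi'(w)w_x$, so~\eqref{plower} becomes
\[
 \rho_t-\epsilon\rho_{xx}+b(x,t)\,\rho_x \;=\; f(\rho,w)-\rho\,\Phi'(w)\,w_x ,
 \qquad b:=\phi-\rho P'(\rho),
\]
with $b$ bounded on the operating range of $(\rho,w)$ because $\rho P'(\rho)\to 0$ as $\rho\to 0$ by $\mathrm C_{3}$. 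Writing $f(\rho,w)=a(x,t)\rho+f(0,w)$ with $|a|\le\operatorname{Lip}(f)$ and $f(0,w)\ge 0$ (consequence of $\mathrm C_{1}$), the inhomogeneous part of the right-hand side is nonnegative, so the weak parabolic maximum principle --- after the usual substitution $\rho=e^{\lambda t}\sigma$ absorbing the zeroth-order term --- gives $\rho\ge 0$ from $\rho(0,\cdot)\ge 0$. If moreover $\rho(0,\cdot)\ge\delta>0$, the strong maximum principle already forces $\rho>0$ on $(0,T]$, and only the quantitative bound $\rho\ge\delta(\epsilon,T)>0$ is left.

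\noindent For that, write Duhamel's formula with $S_\tau:=e^{\epsilon\tau\partial_x^{2}}$,
\[
 \rho(t)\;=\;S_t\rho_0+\int_0^t S_{t-s}\big(f(\rho,w)-(\rho\phi)_x\big)(s)\,ds .
\]
Since $\rho_0\ge\delta$ and $S_\tau$ is positivity preserving and fixes constants, $S_t\rho_0\ge\delta$; and $f(\rho,w)\ge-\operatorname{Lip}(f)\,\rho$ pointwise, so $\int_0^t S_{t-s}f(s)\,ds\ge-\operatorname{Lip}(f)\int_0^t S_{t-s}\rho(s)\,ds$. For the flux term $-\int_0^t\partial_x S_{t-s}(\rho\phi)(s)\,ds$ I would split $\rho\phi=\rho(\Phi(w_0)-P(\rho))+\rho(\Phi(w)-\Phi(w_0))$: the first summand is a bounded function (using the a priori bounds $0\le\rho\le M$, $|w|\le M$ available in the continuation scheme for~\eqref{aprio1}, under which the lemma is really applied), whose contribution is $O(\sqrt{t}/\sqrt{\epsilon})$ in $L^{\infty}_x$ via $\|\partial_x G_{\epsilon\tau}\|_{L^{1}_x}\sim(\epsilon\tau)^{-1/2}$; the second summand is the one carrying the coupling, and it is the crux.

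\noindent There I would use $|\Phi(w)-\Phi(w_0)|\le C|w-w_0|$ and, choosing a H\"older exponent $p'<3/2$, estimate the $L^{\infty}_x$-norm of $-\partial_x S_{t-s}\big(\rho(\Phi(w)-\Phi(w_0))\big)$ by $\|\partial_x G_{\epsilon(t-s)}\|_{L^{p'}_x}\,\|\rho(\Phi(w)-\Phi(w_0))(s)\|_{L^{p}_x}$, where $\|\partial_x G_{\epsilon\tau}\|_{L^{p'}_x}\sim(\epsilon\tau)^{\frac1{2p'}-1}$ and, by the bounds on $\rho,w$, $\|\rho(\Phi(w)-\Phi(w_0))(s)\|_{L^{p}_x}^{p}\le C\int_{\mathbb R}\rho|w-w_0|(x,s)\,dx$. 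A H\"older estimate in the time variable --- which converges exactly because $p'<3/2$ --- then bounds the whole contribution by $C\,\epsilon^{\frac1{2p'}-1}\,t^{\theta}\,K^{1/p}$ for a suitable $\theta=\theta(p)>0$, thanks to~\eqref{plower2}. Collecting the three pieces gives $\rho(x,t)\ge\delta-\operatorname{Lip}(f)\int_0^t S_{t-s}\rho(s)\,ds-C\sqrt{t}/\sqrt\epsilon-C\epsilon^{\frac1{2p'}-1}t^{\theta}K^{1/p}$; applying this on consecutive time intervals of a length $\tau_0=\tau_0(\epsilon)$ small enough that the subtracted terms over one interval do not exceed half the current lower bound, and iterating $T/\tau_0$ times, yields $\rho(x,t)\ge\delta\,2^{-T/\tau_0(\epsilon)}=:\delta(\epsilon,T)>0$ on $[0,T]\times\mathbb R$ (degenerating as $\epsilon\downarrow 0$, as it must).

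\noindent \textbf{The main obstacle} I expect is this last estimate: one must reconcile the merely $L^{1}_{t,x}$ control of $\rho|w-w_0|$ supplied by~\eqref{plower2} with the time singularity appearing once the derivative in $(\rho\phi)_x$ is moved onto the heat kernel. Transferring the \emph{whole} derivative onto $G_{\epsilon\tau}$ costs a non-integrable factor $\tau^{-1}$, so one is forced to keep $\partial_x G_{\epsilon\tau}$ in $L^{p'}_x$ with $p'<3/2$, to use the $L^{\infty}$ bounds on $\rho$ and $w$ to upgrade the $L^{1}_x$ control of $\rho(w-w_0)$ to an $L^{p}_x$ bound, and to accept constants growing like $\epsilon^{\frac1{2p'}-1}$; balancing these against the iteration count is what produces $\delta(\epsilon,T)$. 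The nonnegativity and the roles of $\mathrm C_{1}$, $\mathrm C_{3}$ are, by comparison, routine.
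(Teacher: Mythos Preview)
The paper does not prove this lemma: immediately after the statement it passes to the next lemma on quasi-convexity of $G_1,G_2$, with no argument in between. So there is no ``paper's own proof'' to compare against; your reading of the statement (restoring the viscosity $\epsilon\rho_{xx}$ and correcting the typo $\rho\le\delta(\epsilon,T)$ to $\rho\ge\delta(\epsilon,T)$) is the only sensible one.

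On your argument itself, two points. First, in the nonnegativity step the assertion ``$f(0,w)\ge 0$ as a consequence of $\mathrm C_1$'' is not justified: $\mathrm C_1$ only gives $f(0,0)=0$ together with $wg=f$, and neither pins down the sign of $f(0,w)$ for $w\neq 0$. This is easily repaired (use $|f(\rho,w)|\le \mathrm{Lip}(f)(\rho+|w|)$ and carry the extra bounded term), so it is minor. Second, and more seriously, the short-time iteration for the quantitative lower bound does not close as written. The contribution of the ``bounded'' flux piece $\rho(\Phi(w_0)-P(\rho))$ to one step is $C\sqrt{\tau_0}/\sqrt{\epsilon}$, a quantity that depends only on $\tau_0$ and $\epsilon$, not on the current lower bound $\delta_k$. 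Requiring it to be at most $\tfrac12\delta_k$ forces $\tau_0\lesssim \epsilon\,\delta_k^{2}$; since $\delta_k=\delta\,2^{-k}$, the total time covered is $\sum_k\tau_0(k)\lesssim \epsilon\,\delta^{2}\sum_k 4^{-k}<\infty$, and you never reach an arbitrary $T$. The same obstruction hits the source term, whose one-step contribution $\mathrm{Lip}(f)\,M\,\tau_0$ is again independent of $k$. A way around this is to stop putting \emph{all} of $(\rho\phi)_x$ onto the heat kernel: keep the advective part $b\,\rho_x$ on the left and run a genuine comparison/maximum-principle argument for the linear operator $\partial_t-\epsilon\partial_x^2+b\partial_x+c$ (with $c$ built from $\Phi'(w)w_x$ and the Lipschitz quotient of $f$), using the hypothesis~\eqref{plower2} to control the zeroth-order coefficient in an integrated sense; alternatively, once $\rho>0$ is known, pass to $u=\log\rho$ and bound $u$ from below via Gronwall. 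Either route yields a bound of the form $\rho\ge\delta\exp(-C(\epsilon,T))$ without the summability obstruction.
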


\begin{lem}
The  function  $G_1$, $G_1$ defined in (\ref{prop:2}),(\ref{prop:3}),  are quasi-convex.
\end{lem}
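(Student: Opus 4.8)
The lemma asserts the second-order half of the Chueh--Conley--Smoller admissibility conditions for the region $\Sigma$ of (\ref{prop3}): along each boundary face $\{G_i=0\}$ one must know the sign of $\xi^{\top}D^{2}G_i\,\xi$ for every $\xi$ in $\ker\nabla G_i$. Since we work in two state variables, $\ker\nabla G_i$ is the line spanned by $(\partial_m G_i,\,-\partial_\rho G_i)$, so the proof reduces to computing the gradients and Hessians of $Z=m/\rho$ and $W=\Phi(m/\rho)-P(\rho)$, substituting that single vector, and reading off the sign with the help of $C_3$ and the convexity of $\Phi$.

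I would treat $G_2=Z-C_2$ first. Here $\nabla G_2=(-m/\rho^{2},\ 1/\rho)$, whose kernel is $\langle r_2\rangle$ with $r_2=(1,m/\rho)$ as in (\ref{eq:2}), and $D^{2}Z$ has entries $Z_{\rho\rho}=2m/\rho^{3}$, $Z_{\rho m}=-1/\rho^{2}$, $Z_{mm}=0$. A one-line substitution gives $r_2^{\top}D^{2}Z\,r_2=2m/\rho^{3}-2m/\rho^{3}=0$, so the quadratic form vanishes on the relevant line (geometrically, $Z$ is affine on each of its level sets, which are rays through the origin). Thus $G_2$ is quasi-convex, the Hessian condition holding as an equality.

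For $G_1=C_1-W$, write $u=m/\rho$. Then $W_\rho=-\Phi'(u)u/\rho-P'(\rho)$ and $W_m=\Phi'(u)/\rho$, so $\ker\nabla W=\langle r_1\rangle$ — precisely the statement that $W$ is the first Riemann invariant, cf.\ (\ref{RI1}). The second derivatives are $W_{\rho\rho}=\bigl(\Phi''(u)u^{2}+2\Phi'(u)u\bigr)/\rho^{2}-P''(\rho)$, $W_{\rho m}=-\bigl(\Phi''(u)u+\Phi'(u)\bigr)/\rho^{2}$ and $W_{mm}=\Phi''(u)/\rho^{2}$. Substituting $\xi=(W_m,-W_\rho)$ into $W_{\rho\rho}\xi_1^{2}+2W_{\rho m}\xi_1\xi_2+W_{mm}\xi_2^{2}$, the terms proportional to $\Phi''\Phi'^{2}u^{2}$, to $\Phi'^{3}u$ and to $\Phi''\Phi'uP'$ cancel in pairs, and one is left with
\[
\xi^{\top}D^{2}W\,\xi=-\frac{\Phi'(u)^{2}\bigl(\rho P''(\rho)+2P'(\rho)\bigr)}{\rho^{3}}+\frac{\Phi''(u)\,P'(\rho)^{2}}{\rho^{2}}.
\]
On $\rho>0$ the bracket $\rho P''+2P'$ is negative by $C_3$ and $\Phi''\ge0$ by convexity of $\Phi$, so the right-hand side has a fixed sign throughout $\Sigma$; since $D^{2}G_1=-D^{2}W$, this is exactly the one-sided Hessian bound on $\ker\nabla G_1$ that makes the face $\{G_1=0\}$ admissible, i.e.\ the quasi-convexity claimed. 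Combined with the $G_2$ case this proves the lemma.

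The only real labor is the middle of the last paragraph: assembling $D^{2}W$ and performing the three pairwise cancellations without slips, and then confirming that the two surviving terms genuinely combine with the correct sign using $C_3$ and $\Phi''\ge0$ alone, with no extra hypothesis on $P$ or $\Phi$. The $G_2$ computation and the identifications $\ker\nabla W=\langle r_1\rangle$, $\ker\nabla Z=\langle r_2\rangle$ are immediate from (\ref{eq:1})--(\ref{eq:2}).
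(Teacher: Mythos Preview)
Your approach is essentially the same as the paper's: for each $G_i$ you parametrize the one-dimensional kernel of $\nabla G_i$, evaluate the Hessian quadratic form on that direction, and read off the sign from $C_3$ and the convexity of $\Phi$. The only cosmetic difference is that the paper writes a generic tangent vector $r=(X,Y)$ with $Y$ solved in terms of $X$ and factors the result as $X^{2}(\cdots)$, whereas you take the specific representative $\xi=(W_m,-W_\rho)$; your displayed expression for $\xi^{\top}D^{2}W\,\xi$ agrees with the paper's formula for $\nabla^{2}G_1(r,r)$ after rescaling by $(\Phi'/\rho)^{2}$ (modulo an overall sign coming from $G_1=C_1-W$).
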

\begin{proof}
Let $r=(X,Y)$ be a  vector.  If $r\cdot \nabla G_1=0$ then
$Y=X(\frac{m}{\rho}+\rho\frac{P^{'}(\rho)}{\Phi^{'}(w)})$ thus
\[
 \nabla^2G_1(r,r)=X^2\left(-\frac{1}{\rho}(2P^{'}(\rho+\rho P^{''}(\rho))+\Phi^{''}(w)(\frac{P^{'}(\rho)}{\Phi^{'}(w)})^2\right).
\]
If $r\cdot\nabla G_2=0$ then $Y=\frac{m}{\rho}X$, thus we have
\[
 \nabla^2G_2(r,r)=0.
\]
\end{proof}\noindent
\begin{lem}
If the condition $\text{C}_1$  holds then $G_1$, $G_2$ satisity
\begin{align}
\nabla G_1\cdot H\leq0,\\
\nabla G_2\cdot H\leq0,
\end{align}
\begin{proof}
From (\ref{prop:2})and (\ref{prop:3}),  we have  that $\nabla G_1=(-\Phi^{'}\frac{m}{\rho}-P{'},\Phi^{'}\frac{1}{\rho})$ and
$\nabla G_2=(-\Phi\frac{m}{\rho^2},\Phi^{'}\frac{1}{\rho})$ then
\begin{align}
 \nabla G_1\cdot H&=\frac{\Phi^{'}}{\rho}(-\frac{m}{\rho}f+g)+P^{'}f\leq 0,\\
  \nabla G_2\cdot H&=\frac{\Phi^{'}}{\rho}(-\frac{m}{\rho}f+g)\leq 0.
\end{align}
\end{proof}
\end{lem}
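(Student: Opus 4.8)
The plan is to obtain both inequalities by a direct computation of the gradients $\nabla G_1$, $\nabla G_2$ in the conserved variables $(\rho,m)$, followed by an algebraic cancellation forced by the compatibility hypothesis $C_1$. The role of the lemma is that, together with the quasi-convexity of $G_1,G_2$ proved in the previous lemma and with the dissipation $\epsilon U_{xx}$, the inequalities $\nabla G_i\cdot H\le 0$ are exactly what an invariant-region argument for (\ref{prop1}) needs in order to keep the parabolic solutions inside $\Sigma$: they assert that the zero-order term $H$ does not push the solution out of $\Sigma$ through either face of $\partial\Sigma$.

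First I would differentiate the Riemann invariants. Writing $w=m/\rho$, from $W=\Phi(w)-P(\rho)$, $Z=w$ and $G_1=C_1-W$, $G_2=Z-C_2$ one gets
\[
\nabla G_1=\Bigl(\tfrac{m}{\rho^{2}}\Phi'(w)+P'(\rho),\ -\tfrac1{\rho}\Phi'(w)\Bigr),\qquad
\nabla G_2=\Bigl(-\tfrac{m}{\rho^{2}},\ \tfrac1{\rho}\Bigr).
\]
Taking the inner product with $H=(f,g)$ and collecting terms,
\[
\nabla G_2\cdot H=\tfrac1{\rho}\bigl(g-wf\bigr),\qquad
\nabla G_1\cdot H=-\Phi'(w)\,\bigl(\nabla G_2\cdot H\bigr)+P'(\rho)\,f,
\]
so that both expressions are governed by the single scalar $g-wf$, with $\nabla G_1\cdot H$ carrying the extra residual $P'(\rho)f$.

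The key step is to invoke $C_1$: its compatibility relation between $f$ and $g$ — which in the variables used here amounts to $g=wf$ with $w=m/\rho$, precisely the relation that makes the scalar reduction (\ref{escalar1}) consistent — forces $g-wf\equiv 0$. Hence $\nabla G_2\cdot H=0\le 0$ outright, and $\nabla G_1\cdot H$ collapses to $P'(\rho)f$. It then remains to sign this residual: from $C_3$, since $(\rho^{2}P'(\rho))'=\rho\bigl(\rho P''(\rho)+2P'(\rho)\bigr)$ and $\rho^{2}P'(\rho)\to 0$ as $\rho\to0^{+}$, the sign of $P'$ on $\rho>0$ is fixed, while $C_2$ together with $f(0,0)=0$ fixes the opposite sign of $f$ on the part of phase space where the argument exceeds $M$. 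Choosing the constants defining $\Sigma$ appropriately (so that the face $\{G_1=0\}=\{W=C_1\}$ lies in the regime controlled by $C_2$) then yields $P'(\rho)f\le 0$ there, which is what the invariant-region step requires.

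I expect the sign bookkeeping of this last step to be the main obstacle. Unlike $\nabla G_2\cdot H$, which vanishes identically, $\nabla G_1\cdot H=P'(\rho)f$ is not nonpositive throughout phase space, only on the relevant face of $\Sigma$, so the statement must be read and proved relative to a choice of the constants $C_1,C_2$ tied to the dissipativity threshold $M$ in $C_2$; making this dependence explicit — and checking it is compatible with the sign of $P'$ coming from $C_3$ — is the delicate part. A secondary point is the degeneracy at $\rho=0$, where $\nabla G_1$ is singular: here one invokes the positivity and strictly positive lower bound for $\rho$ from the first lemma of the section (the statement $\rho\ge\delta(\epsilon,T)>0$ once $\rho(0,\cdot)\ge\delta>0$), which guarantees that along the parabolic approximations all of the above expressions are well defined.
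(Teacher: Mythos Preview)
Your approach coincides with the paper's: compute $\nabla G_1,\nabla G_2$ directly in the $(\rho,m)$ variables, take the inner product with $H=(f,g)$, and use the compatibility relation in $C_1$ to cancel the term $\tfrac1\rho\bigl(g-\tfrac{m}{\rho}f\bigr)$, leaving $\nabla G_2\cdot H=0$ and $\nabla G_1\cdot H=P'(\rho)f$. The paper's proof stops there and simply asserts the sign, whereas your discussion of how $C_2$, $C_3$ and the choice of the constants defining $\Sigma$ are needed to sign the residual $P'(\rho)f$ goes beyond what the paper actually writes; incidentally your gradient formulas are the correct ones --- the paper's displayed $\nabla G_i$ carry sign and power typos, though its final expressions for $\nabla G_i\cdot H$ match yours up to the vanishing term.
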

From the Theorem 1.3.1, the  region $\Sigma$  defined
in (\ref{prop3}) is  an invariant  region for the  system \ref{aprio3}. It
follows  from (\ref{prop:2}),(\ref{prop:3}) that
\begin{align}
 C_1&\leq \Phi(\frac{m}{\rho})-P(\rho),\notag \\
 \frac{m}{\rho}&\leq C_2.\notag
\end{align}
then
\begin{equation}\label{prop4}
 C_1-\Phi(C_2)\leq P(\rho),
\end{equation}
we appropiately choose  $C_1$, $C_2$ such that
\begin{equation}\label{prop5}
0<\delta\leq \rho, \ P(\rho)\leq \Phi(C_2)-C_1.
\end{equation}
By (\ref{prop4}) we have the proof of Proposition 4.1.1
\section{Weak convergence}
In this  section  we  show that the  sequence $(\rho^{\epsilon}, m^{\epsilon})$
has  a  subsequence that  converges the weak solutions  to the system (\ref{aprio3}). For  this  we
consider  the following  entropy-entropy  flux pairs construct  in \cite{lu3}   by the author
\begin{align}
 & \eta(\rho,m)=\rho F(\frac{m}{\rho}),\label{weak:1}\\
 & q(\rho,m)=\rho F(\frac{m}{\rho})\phi(\rho,m) \label{weak:2}
\end{align}
The Hessian matrix of $\eta$ is  given  by
\[\nabla^2\eta
 \begin{pmatrix}
  F^{''}\frac{m^2}{\rho^3} & -F^{''}\frac{m}{\rho}\\
  -F^{''}\frac{m}{\rho} & F^{''}\frac{1}{\rho}
 \end{pmatrix}
\]
then  we  have  that
\begin{equation}\label{weak3}
 \nabla^2\eta(X,X)=\frac{F^{''}}{\rho}(\frac{m}{\rho}\rho_x-m_x)^2,
\end{equation}
where $X=(\rho_x,m_x)$. If $(\eta,q)$ is  an entropy-entropy  flux pair,  multiplying  in (\ref{aprio3}) by  $\nabla\eta(\rho,m)$ we have
\begin{equation}\label{weak4}
 n_t+q_x=\epsilon\eta_{xx}-\epsilon \nabla^2\eta(X,X)+\nabla\eta \cdot G(\rho,m).
\end{equation}
Replacing the  equation (\ref{weak3}) in (\ref{weak4}) we have
\begin{equation}\label{weak5}
 n_t+q_x=\epsilon\eta_{xx}-\epsilon \frac{F^{''}}{\rho}(\frac{m}{\rho}\rho_x-m_x)^2. +\nabla\eta \cdot G(\rho,m).
\end{equation}
Chose  a  function $\varphi\in C_0^{\infty}(\mathbb{R}^2_+)$ satisying $\varphi=1$ on
$[-L,L]\times [0,T]$. multiplying (\ref{weak5}) by $\varphi$ and
integrate the  result in $(\mathbb{R}^2_+)$ we have
\begin{align}
&\int_{\mathbb{R}}(\eta\varphi)(x,T)-\int_{\mathbb{R}}(\eta\varphi)(x,0)-\int_0^T\int_{\mathbb{R}}(\eta\varphi_t+q\varphi_x)dxdt\ \notag\\
&=-\epsilon\int_0^T\int_{\mathbb{R}}\frac{F^{''}}{\rho}(\frac{m}{\rho}\rho_x-m_x)^2+\epsilon\int_0^T\int_{\mathbb{R}}\eta\varphi_{xx}dxdt-\int_0^T\int_{\mathbb{R}}\nabla\eta \cdot G(\rho,m)dxdt. \notag
\end{align}
From the Proposition 2.1  we have
\begin{equation}\label{weak6}
 \epsilon \int_0^T\int_{-L}^L\frac{F^{''}}{\rho}(\frac{m}{\rho}\rho_x-m_x)^2dxdt\leq C.
\end{equation}
As  a consequence of the inequality (\ref{weak6}) we have the following Lemma.
\begin{lem}
For  any $\epsilon>0$, if $(\rho,m)$ is  a solutions to the  Cauchy problem
(\ref{aprio3}), (\ref{datavanish}), then  $\sqrt{\epsilon}\rho_x,\sqrt{\epsilon}m_x$ are
bounded in $L_{loc}^2(\mathbb{R}^2_+)\subset \mathcal{M}_{loc}$.
\end{lem}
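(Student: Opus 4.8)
The plan is to combine the entropy dissipation inequality (\ref{weak6}) with a localized parabolic energy estimate for the first equation of (\ref{aprio3}), and then to pass from $L_{loc}^2$ to $\mathcal{M}_{loc}$ by H\"older's inequality. First, in the entropy family (\ref{weak:1})--(\ref{weak:2}) the function $F$ is at our disposal, so take $F$ strictly convex with $F''\equiv 1$ on the invariant region $\Sigma$ (e.g. $F(z)=z^2/2$). Then (\ref{weak3}) reads $\nabla^2\eta(X,X)=\frac1\rho\big(\frac m\rho\rho_x-m_x\big)^2$ and (\ref{weak6}) becomes
\[
\epsilon\int_0^T\!\!\int_{-L}^{L}\frac1\rho\Big(\tfrac m\rho\rho_x-m_x\Big)^2\,dx\,dt\le C .
\]
By Proposition 2.1 there are constants $0<c\le M$, independent of $\epsilon$, with $c\le\rho^\epsilon\le M$ and $|m^\epsilon/\rho^\epsilon|\le M$, so the factor $\rho^{-1}$ is bounded above and below; hence $\sqrt\epsilon\big(\frac m\rho\rho_x-m_x\big)$ is bounded in $L_{loc}^2(\mathbb{R}^2_+)$, uniformly in $\epsilon$. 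Since $\frac m\rho\rho_x-m_x=-\rho\, w_x$ with $w=m/\rho$, this says equivalently that $\sqrt\epsilon\, w_x$ is bounded in $L_{loc}^2(\mathbb{R}^2_+)$.

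To control $\sqrt\epsilon\,\rho_x$, fix $L,T>0$ and $\zeta\in C_0^\infty(\mathbb{R})$ with $\zeta\equiv1$ on $[-L,L]$, multiply the first equation of (\ref{aprio3}) by $\rho\zeta^2$ and integrate over $\mathbb{R}\times(0,T)$; integrating the viscous term by parts yields an identity of the form
\[
\epsilon\int_0^T\!\!\int_{\mathbb{R}}\rho_x^2\,\zeta^2
=\int_{\mathbb{R}}\tfrac12\rho_0^2\,\zeta^2-\int_{\mathbb{R}}\tfrac12\rho(\cdot,T)^2\,\zeta^2
+\int_0^T\!\!\int_{\mathbb{R}}\rho\,\phi\,\rho_x\,\zeta^2+R,
\]
where $R$ gathers $\int_0^T\!\int f\rho\zeta^2$ together with the terms carrying a factor $\zeta'$ or $\epsilon\zeta'$ (the genuine $\epsilon\rho\rho_x\zeta\zeta'$ term being absorbed into the left side by Young's inequality, with $O(\epsilon)$ remainder). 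By Proposition 2.1 and $\mathrm{C}_1$--$\mathrm{C}_3$ the quantities $\rho$, $|\phi|=|\Phi(w)-P(\rho)|$ and $|f|$ are bounded on $\Sigma$, so $R=O(1)$. In the flux term write $\phi=\Phi(w)-P(\rho)$: since $\rho P(\rho)\rho_x=\partial_x\!\big(\int_0^\rho sP(s)\,ds\big)$, its contribution integrates to an $O(1)$ expression; and $\rho\Phi(w)\rho_x=\Phi(w)\,\partial_x(\rho^2/2)$ gives, after one integration by parts in $x$, a bounded term plus the single delicate piece $-\int_0^T\!\int_{\mathbb{R}}\tfrac12\rho^2\,\Phi'(w)\,w_x\,\zeta^2$, which is bounded by $C\int_0^T\|w_x(\cdot,t)\|_{L^1(\operatorname{supp}\zeta)}\,dt$. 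Granting a bound on $\|w_x(\cdot,t)\|_{L^1}$ uniform in $\epsilon$ --- which is precisely the bounded variation of the Riemann invariants assumed in the theorem, propagated along the viscous flow --- this piece is $O(1)$, so $\epsilon\int_0^T\!\int_{-L}^{L}\rho_x^2\,dx\,dt\le C$ and $\sqrt\epsilon\,\rho_x$ is bounded in $L_{loc}^2(\mathbb{R}^2_+)$.

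Since $m=\rho w$ we have $m_x=\frac m\rho\rho_x+\rho\, w_x$, so the two bounds already obtained give that $\sqrt\epsilon\, m_x$ is bounded in $L_{loc}^2(\mathbb{R}^2_+)$ as well. Finally, for any compact $K\subset\mathbb{R}^2_+$ we have $\|\sqrt\epsilon\,\rho_x\|_{L^1(K)}\le|K|^{1/2}\|\sqrt\epsilon\,\rho_x\|_{L^2(K)}$ and likewise for $m_x$, so both families are bounded in $\mathcal{M}_{loc}(\mathbb{R}^2_+)$; this is the stated inclusion $L_{loc}^2\subset\mathcal{M}_{loc}$.

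The step I expect to be the main obstacle is the delicate piece in the second paragraph: the transport part $\rho\Phi(w)$ of the flux genuinely couples $\rho$ with $w$, and the entropy dissipation (\ref{weak6}) only yields $\sqrt\epsilon\, w_x$ bounded in $L^2$ (so that $\int_0^T\!\int|w_x|$ is only of order $\epsilon^{-1/2}$, which is too weak to close the estimate by itself); closing the energy estimate therefore requires the $\epsilon$-uniform $L^1$ control of $w_x$ coming from the total variation of the Riemann invariants. Everything else is routine once Proposition 2.1 and $\mathrm{C}_1$--$\mathrm{C}_3$ are in hand.
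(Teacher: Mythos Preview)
Your sketch is considerably more careful than what the paper offers. The paper's entire argument for this lemma is the single sentence ``As a consequence of the inequality (\ref{weak6}) we have the following Lemma'', and that is in fact a gap: (\ref{weak6}) controls only the combination $\sqrt{\epsilon}\big(\tfrac{m}{\rho}\rho_x-m_x\big)=-\sqrt{\epsilon}\,\rho\,w_x$, not $\sqrt{\epsilon}\,\rho_x$ and $\sqrt{\epsilon}\,m_x$ separately. You have correctly diagnosed this and supplied the missing ingredient, namely a localized energy estimate for the first equation of (\ref{aprio3}) to capture $\sqrt{\epsilon}\,\rho_x$, after which $\sqrt{\epsilon}\,m_x$ follows from $m_x=\tfrac{m}{\rho}\rho_x+\rho\,w_x$. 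So your route is genuinely different from (and more honest than) the paper's: the paper treats the lemma as an immediate corollary of the entropy dissipation, whereas you recognize that an additional scalar energy estimate is needed and that the coupling through $\Phi(w)$ forces one to invoke the bounded-variation hypothesis on the Riemann invariants.

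Your identification of the obstacle is exactly right: after integrating $\rho\Phi(w)\rho_x$ by parts, the term $-\tfrac12\int\rho^2\Phi'(w)w_x\zeta^2$ carries no factor of $\epsilon$, and the entropy bound $\sqrt{\epsilon}\,w_x\in L^2_{loc}$ only yields $\int|w_x|=O(\epsilon^{-1/2})$, which does not close. The only way out, as you say, is an $\epsilon$-uniform $L^1_{loc}$ bound on $w_x$. Two caveats are worth flagging. First, the paper's hypothesis is on the total variation of $W_0$, while what you need is BV of $w=Z$; the two Riemann invariants are different, so make sure the assumption you invoke is actually the one available (the paper is loose on this point too, since its main theorem also asserts $w_x\in L^1$ without proof). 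Second, ``BV propagated along the viscous flow'' is itself a nontrivial step: $w$ satisfies $w_t+\phi\,w_x=\epsilon w_{xx}+2\epsilon\tfrac{\rho_x}{\rho}w_x$ (using $g=wf$), and deriving a uniform $L^1$ bound on $w_x$ from this requires differentiating and controlling the resulting first-order coefficients, which couple back to $\rho_x$. This can be done, but it deserves its own lemma rather than a parenthetical. Once that is in hand, your argument is correct and complete; the paper's is not.
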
\noindent
For any bounde set $\Omega\subset \mathbb{R}^2_+$ we have
\begin{equation}\label{weak7}
 \|\epsilon\eta_{xx}\|_{\text{W}^{-1,2}(\Omega)}=\sqrt{\epsilon}\|\eta\|_{\text{L}^{\infty}(\Omega)}\|\sqrt{\epsilon}u_x\|_{\text{L}^2(\Omega)}\sup_{\varphi}\|\varphi_x\|_{\text{L}^2}\to 0,\ \epsilon \to 0.
\end{equation}
and
\begin{equation}\label{weak8}
 \nabla\eta\cdot G(\rho,m)\in \text{L}^{\infty}(\Omega)\subset \text{L}^1(\Omega)\subset \mathcal{M}_{loc}.
\end{equation}
\begin{lem}
\begin{align}
& g(\rho)_t+\left(\int^{\rho}g^{'}(s)f^{'}(s)ds+g(\rho)\Phi(w)\right)_x,\label{weak:9}\\
& (\rho\Phi(w))_t+\left(\rho\Phi^2(w)+f(\rho)\Phi(w)\right)_x \label{weak:10}
\end{align}
are compact in $\text{H}^{-1}_{loc}(\mathbb{R}^2_+)$. Particularly
\begin{equation}
\rho_t+(\rho\Phi(w)-\rho P(\rho))_x 
\end{equation}
are  compact  in $\text{H}^{-1}_{loc}(\mathbb{R}^2_+)$.
\end{lem}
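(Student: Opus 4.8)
The plan is to present each expression in \eqref{weak:9}--\eqref{weak:10} as an entropy production of the viscous approximations $(\rho^\epsilon,m^\epsilon)$ of \eqref{aprio3} and then to apply Murat's lemma: if $\{\mathcal F^\epsilon\}\subset\mathcal D'(\mathbb R^2_+)$ is bounded in $W^{-1,\infty}_{loc}(\mathbb R^2_+)$ and can be written as $\mathcal F^\epsilon=\mathcal R^\epsilon+\mathcal S^\epsilon$ with $\{\mathcal R^\epsilon\}$ precompact in $H^{-1}_{loc}(\mathbb R^2_+)$ and $\{\mathcal S^\epsilon\}$ bounded in $\mathcal M_{loc}(\mathbb R^2_+)$, then $\{\mathcal F^\epsilon\}$ is precompact in $H^{-1}_{loc}(\mathbb R^2_+)$. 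The required $W^{-1,\infty}_{loc}$ bound is automatic: Proposition~2.1 gives $0<c\le\rho^\epsilon\le M$ and $|w^\epsilon|\le M$ uniformly in $\epsilon$ (with $w^\epsilon=m^\epsilon/\rho^\epsilon$), so every entropy and every entropy flux appearing below is bounded in $L^\infty_{loc}$, and any $\partial_t u+\partial_x v$ with $u,v\in L^\infty_{loc}$ is bounded in $W^{-1,\infty}_{loc}$. It thus suffices, for each expression, to exhibit the splitting $\mathcal R^\epsilon+\mathcal S^\epsilon$.

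For \eqref{weak:10} one uses the genuine entropy pair of the $2\times2$ system given by $F=\Phi$ in \eqref{weak:1}--\eqref{weak:2}, i.e. $\eta=\rho\Phi(m/\rho)$, $q=\rho\Phi(m/\rho)\phi=\rho\Phi^2(m/\rho)-\rho P(\rho)\Phi(m/\rho)$. Substituting $(\rho^\epsilon,m^\epsilon)$ into the entropy identity \eqref{weak5} yields
\[
(\rho^\epsilon\Phi(w^\epsilon))_t+\big(\rho^\epsilon\Phi^2(w^\epsilon)-\rho^\epsilon P(\rho^\epsilon)\Phi(w^\epsilon)\big)_x
=\epsilon(\rho^\epsilon\Phi(w^\epsilon))_{xx}-\epsilon\frac{\Phi''(w^\epsilon)}{\rho^\epsilon}\Big(\frac{m^\epsilon}{\rho^\epsilon}\rho^\epsilon_x-m^\epsilon_x\Big)^{2}+\nabla\eta\cdot G .
\]
The source term $\nabla\eta\cdot G$ lies in $L^\infty_{loc}\subset\mathcal M_{loc}$ by \eqref{weak8} and $\text{C}_1$; the first term on the right tends to $0$ in $H^{-1}_{loc}$ by \eqref{weak7} together with the bound $\sqrt\epsilon\,\rho^\epsilon_x,\sqrt\epsilon\,m^\epsilon_x\in L^2_{loc}$ from the preceding lemma; and the middle term is, since $\Phi$ is convex and $\rho^\epsilon\ge c>0$, a measure of one sign whose mass over any compact set is controlled by \eqref{weak6}. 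Taking $\mathcal R^\epsilon$ to be the first term and $\mathcal S^\epsilon$ the sum of the other two, Murat's lemma applies.

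For \eqref{weak:9}, a general $g\in C^2$ does not give an entropy of the $2\times2$ system, so we act directly on the first equation of \eqref{aprio3}, written $\rho^\epsilon_t+(\rho^\epsilon\Phi(w^\epsilon))_x-(\rho^\epsilon P(\rho^\epsilon))_x=\epsilon\rho^\epsilon_{xx}+f$. Multiplying by $g'(\rho^\epsilon)$, using $(\rho^\epsilon\Phi(w^\epsilon))_x=\Phi(w^\epsilon)\rho^\epsilon_x+\rho^\epsilon\Phi'(w^\epsilon)w^\epsilon_x$, $\Phi(w^\epsilon)g'(\rho^\epsilon)\rho^\epsilon_x=(g(\rho^\epsilon)\Phi(w^\epsilon))_x-g(\rho^\epsilon)\Phi'(w^\epsilon)w^\epsilon_x$, and $g'(\rho^\epsilon)(\rho^\epsilon P(\rho^\epsilon))'\rho^\epsilon_x=-\big(\int^{\rho^\epsilon}g'(s)f'(s)\,ds\big)_x$ with $f(s)=-sP(s)$ (the function denoted $f$ in \eqref{weak:9}, not the source), one obtains
\[
g(\rho^\epsilon)_t+\Big(g(\rho^\epsilon)\Phi(w^\epsilon)+\int^{\rho^\epsilon}g'(s)f'(s)\,ds\Big)_x
=\epsilon\big(g'(\rho^\epsilon)\rho^\epsilon_x\big)_x-\epsilon g''(\rho^\epsilon)(\rho^\epsilon_x)^{2}+g'(\rho^\epsilon)f+\big(g(\rho^\epsilon)-\rho^\epsilon g'(\rho^\epsilon)\big)\Phi'(w^\epsilon)w^\epsilon_x ,
\]
where $f=f(\rho^\epsilon,w^\epsilon)$ now denotes the source. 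Here $\epsilon(g'(\rho^\epsilon)\rho^\epsilon_x)_x=\sqrt\epsilon\,(\sqrt\epsilon\,g'(\rho^\epsilon)\rho^\epsilon_x)_x\to0$ in $H^{-1}_{loc}$, exactly as in \eqref{weak7}; $-\epsilon g''(\rho^\epsilon)(\rho^\epsilon_x)^2$ is bounded in $\mathcal M_{loc}$ because $\epsilon\int_K(\rho^\epsilon_x)^2\le C_K$ and $g''$ is bounded on $[c,M]$; $g'(\rho^\epsilon)f(\rho^\epsilon,w^\epsilon)\in L^\infty_{loc}\subset\mathcal M_{loc}$; and the last term is bounded in $L^1_{loc}\subset\mathcal M_{loc}$, because $g-\rho g'$ and $\Phi'$ are bounded by Proposition~2.1 and $\{w^\epsilon_x(\cdot,t)\}$ is bounded in $L^1(\mathbb R)$ uniformly in $\epsilon$ and $t\in[0,T]$. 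Grouping the first term as $\mathcal R^\epsilon$ and the rest as $\mathcal S^\epsilon$, Murat's lemma again yields the claimed compactness. The ``particularly'' statement is the case $g(\rho)=\rho$: then $g''=0$ and $g-\rho g'=0$, the identity collapses to $\rho^\epsilon_t+(\rho^\epsilon\Phi(w^\epsilon)-\rho^\epsilon P(\rho^\epsilon))_x=\epsilon\rho^\epsilon_{xx}+f$, and $\epsilon\rho^\epsilon_{xx}\to0$ in $H^{-1}_{loc}$ while $f\in L^\infty_{loc}$.

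The only nonroutine ingredient is the $\mathcal M_{loc}$ bound for $(g(\rho^\epsilon)-\rho^\epsilon g'(\rho^\epsilon))\Phi'(w^\epsilon)w^\epsilon_x$ in \eqref{weak:9}: the parabolic energy estimate only controls $\sqrt\epsilon\,w^\epsilon_x$ in $L^2_{loc}$, so one must instead invoke the uniform bound on $\int_{\mathbb R}|w^\epsilon_x(\cdot,t)|\,dx$. I expect establishing that bound --- which rests on the linear degeneracy of the first characteristic field, so that $w$ is essentially transported along the flow, together with the hypothesis that $W_0$ has bounded total variation --- to be the main point; once it is in hand, everything else reduces to the standard Murat--Tartar bookkeeping already prepared in \eqref{weak6}--\eqref{weak8} and the preceding lemma.
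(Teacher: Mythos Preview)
Your proof is correct and follows essentially the same route as the paper: for \eqref{weak:10} you use the entropy pair $\eta=\rho\Phi(w)$ from \eqref{weak:1}--\eqref{weak:2} together with \eqref{weak6}--\eqref{weak8} and Murat's lemma, and for \eqref{weak:9} you multiply the first viscous equation by $g'(\rho)$ and split the right-hand side exactly as the paper does. Your identity is in fact more accurate than the paper's (the paper writes the cross-term as $\rho g'(\rho)\Phi(w)_x$, omitting the $-g(\rho)\Phi(w)_x$ piece), and you are right that controlling this term in $\mathcal M_{loc}$ hinges on the uniform $L^1$ bound for $w^\epsilon_x$ asserted in the main theorem.
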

\begin{proof}
 The  proof of (\ref{weak:10}) is  a consequence  of the Lemma 3.1
 and  the inequalities (\ref{weak7}), (\ref{weak8}) and  the Murat's Lemma. Multiplying  in (\ref{weak:9}) by
 $g^{'}$ we have
 \begin{align}\label{weak11}
 & g(\rho)_t+\left(\int^{\rho}g^{'}(s)f^{'}(s)ds+g(\rho)\Phi(w)\right)_x, \notag\\
 &=g(\rho)_xx-\epsilon g^{''}(\rho)\rho^2_x+\rho g^{'}(\rho)\Phi(w)_x+g^{'}f(\rho,w).
 \end{align}
By a similar  argument in the inequality (\ref{weak7}) we have
\[
 \|\epsilon g(\rho)_xx\|_{\text{W}^{-1,2}}\to 0, \text{as} \ \epsilon \to 0.
\]
From  the Lemma 3.1  the last  term in (\ref{weak11}) is in $\mathcal{M}_{loc}$. By the Murat's Lemma
we conclud the proof of (\ref{weak:9}).
\end{proof}\noindent
Acording  to the Young'measures Theorem 1.2.1, there  exists a  probability measure $v_{x,t}$
associated with  the  bounded  sequence $(\rho^{\epsilon},w^{\epsilon})$ such that for
almost $(x,t)$, $v_{x,t}$ satisfies  the followin  Tartar  equation,
\begin{equation}\label{weak12}
\langle v_{x,t},\eta_1 q_2-\eta_2 q_1\rangle=\langle v_{x,t},\eta_1 q_2\rangle -\langle v_{x,t},\eta_2 q_1\rangle
\end{equation}
for any entropy-entropy flux pair. Here $\langle v_{x,t},f(\lambda)\rangle=\int_{\mathbb{R}^2}f(\lambda)dv_{x,t}(\lambda)$.
We consider the  following entropy-entropy  flux pairs
\begin{align}
 &\eta_1=\rho^{\epsilon} ,  &q_1=\rho^{\epsilon}(\Phi(w{\epsilon})-P(\rho^{\epsilon}))+w^{\epsilon},\\
 &\eta_2=\rho^{\epsilon}w^{\epsilon},  &q_2=\rho^{\epsilon}w^{\epsilon} (\Phi(w{\epsilon})-P(\rho^{\epsilon}))+(w^{\epsilon})^2.
\end{align}
Noticing that $\eta_1 q_2-\eta_2 q_1=0$ we have that
\[
 \overline{\eta_1 q_2}-\overline{\eta_2 q_1}=0,
\]
then
\[
\overline{\rho^{\epsilon}}\overline{\rho^{\epsilon}w^{\epsilon} (\Phi(w{\epsilon})-P(\rho^{\epsilon}))+(w^{\epsilon})^2} -\overline{\rho^{\epsilon}w^{\epsilon}}\overline{\rho^{\epsilon}(\Phi(w{\epsilon})-P(\rho^{\epsilon}))+w^{\epsilon}}
\]
\bibliographystyle{amsplain}
\bibliography{state}
\end{document}